\numberwithin{equation}{section}
\theoremstyle{plain} \newtheorem{theorem}{Theorem}[section]
\theoremstyle{plain} 
\theoremstyle{plain} 
\theoremstyle{plain} \newtheorem{proposition}[theorem]{Proposition}
\theoremstyle{plain} \newtheorem{question}[theorem]{Question}
\theoremstyle{remark} \newtheorem{remark}[theorem]{Remark}
\theoremstyle{definition} 
\theoremstyle{definition} 
\theoremstyle{remark} 
\newcommand{ \R}{ \mathbb R }
\begin{document}

\title{When do skew-products exist?}

\author[S.N. Evans]{Steven N. Evans}
\thanks{S.N.E. was supported in part by NSF grant DMS-0907639 and NIH grant 1R01GM109454-01}
\address{Department of Statistics\\
 367 Evans Hall  \#3860\\
 University of California \\
  Berkeley, CA  94720-3860 \\
   USA}
\email{evans@stat.berkeley.edu}

\author[A. Hening]{Alexandru Hening }
\thanks{A.H. was supported by EPSRC grant EP/K034316/1}
\address{Department of Statistics \\
 University of Oxford \\
 1 South Parks Road \\
 Oxford OX1 3TG \\
 United Kingdom}
 \email{hening@stats.ox.ac.uk}

\author{Eric Wayman}

\address{Department of Mathematics\\
         University  of California\\
         970 Evans Hall \#3840\\
         Berkeley, CA 94720-3840\\
         U.S.A.}

\email{ewayman@math.berkeley.edu}

\date{\today}

\begin{abstract}
The classical skew-product decomposition of planar Brownian motion
represents the process in polar coordinates as an autonomously
Markovian radial part and an angular part that is an independent
Brownian motion on the unit circle time-changed according to
the radial part.  Theorem~4 of \cite{L09} gives a broad generalization
of this fact to a setting where there is a diffusion on a manifold $X$
with a distribution that is equivariant under the smooth action of
a Lie group $K$.  Under appropriate conditions, there is a decomposition
into an autonomously Markovian ``radial'' part that lives on the space
of orbits of $K$ and an ``angular'' part that is an independent
Brownian motion on the homogeneous space $K/M$, where $M$ is the isotropy
subgroup of a point of $x$, that is time-changed with a time-change that
is adapted to the filtration of the radial part.
We present two apparent counterexamples to \cite[Theorem~4]{L09}.  In the
first counterexample the angular part is not a time-change of any Brownian motion
on $K/M$, whereas in the second counterexample the angular part is the time-change
of a Brownian motion on $K/M$ but this Brownian motion is not independent of the radial part.
In both of these examples $K/M$ has dimension $1$.  The statement and proof of \cite[Theorem~4]{L09}
remain valid when $K/M$ has dimension greater than $1$.  Our examples raise the question
of what conditions lead to the usual sort of skew-product decomposition when $K/M$ has dimension $1$
and what conditions lead to there being no decomposition at all or one in which the angular part is a time-changed
Brownian motion but this Brownian motion is not independent of the radial part.
\end{abstract}

\maketitle

\section{Introduction}

The archetypal skew-product decomposition of a Markov process is the decomposition of a Brownian motion in the plane $(B_t)_{t \geq 0}$ into its radial and angular part
\begin{equation}\label{e:BM_skew}
B_t = |B_t|  \exp(i \theta_t).
\end{equation}
Here the radial part $(|B_t|)_{t \geq 0}$ is a two-dimensional Bessel process  and $\theta_t = y_{\tau_t}$, where $(y_t)_{t \geq 0}$ is a one-dimensional Brownian motion that is independent of the radial part $(|B_t|)_{t \geq 0}$ and $\tau$ is a time-change that is adapted to the filtration generated by the process  $|B|$. Specifically, $\tau_t = \int_0^t \frac{1}{|B_s|^2}ds$. See Corollary 18.7 from \cite{K01} for more details.

The most obvious generalization of this result is obtained in \cite{G63}. The
process considered  is any time-homogeneous diffusion $(x_t)_{t \geq 0}$ with state space $\mathbb{R}^3$ that satisfies the additional assumptions that almost surely every path does not pass through the origin at positive times and that $(x_t)_{t \geq 0}$ is isotropic in the sense that the law of $(x_t)_{t \geq 0}$ is equivariant under the group of
orthogonal transformations;  that is, if we consider a point $(r, \theta) \in \mathbb{R}^3$ in spherical coordinates, where $r \in \mathbb{R}_+$ is the radial coordinate and $\theta$ is a point on the unit sphere $S^2$, and if we take $k \in O(3)$, the orthogonal group on $\mathbb{R}^3$, then
\begin{equation*}
P_{(r, k \theta)}\left( kA \right) = P_{(r, \theta)} \left( A \right)
\end{equation*}
for any Borel set $A$ in path space $C(\mathbb{R}_+, \mathbb{R}^3)$.  Here $P_{x}(A)$ is the probability a path started at $x$ belongs to the Borel set $A$ \cite[(2.2)]{G63}.
Theorem 1.2 of \cite{G63} states that we can decompose $(x_t)_{t \geq 0}$ as $x_{t} = r_{t} \theta_{t}$ where the radial motion $(r_t)_{t \geq 0}$ is a time-homogeneous Markov process on $\mathbb{R}_+$ and the angular process $(\theta_{t})_{t \geq 0}$  can be written as $\theta_{t} = B_{\tau_{t}}$, with $(B_t)_{t \geq 0}$  a spherical Brownian motion independent of the radial part and with the time-change $(\tau_{t})_{t \geq 0}$ adapted to the filtration generated by the radial part.

More generally, one can consider a group $G$ acting on $\mathbb{R}^n$ and $(x_t)_{t \geq 0}$ a Markov process on $\mathbb{R}^n$ such that the distribution of $(x_t)_{t\geq 0}$ satisfies the equivariance condition
\[
P_{gx}(gA) = P_x(A)
\]
for any Borel set $A$ in path space. The existence of a skew-product decomposition for this setting
is explored in \cite{Chy08} when $(x_t)_{t \geq 0}$ is a Dunkl process and $G$ is the group of distance preserving transformations of $\mathbb{R}^n$.

The paper \cite{PR88} investigates the skew-product decomposition of a Brownian motion on a $C^{\infty}$ Riemannian manifold $(M,g)$ which can be written as a product of a radial manifold $R$ and an angular manifold $\Theta$, both of which are assumed to be smooth and connected. Provided the Riemannian metric respects the product structure of the manifold in a suitable manner, \cite[Theorem~4]{PR88} establishes the existence of a skew-product decomposition such that the radial motion is a Brownian motion with drift on $R$ and the angular motion is a time-change of a
Brownian motion on $\Theta$ that is independent of the radial motion.

A broadly applicable skew-product decomposition result is obtained in \cite{L09} for a general continuous Markov process $(x_t)_{t \geq 0}$ with state space  a smooth manifold $X$ and distribution that is equivariant under the smooth action of a Lie group $K$ on $X$.  Here the decomposition of $(x_t)_{t \geq 0}$ is into a radial part $(y_t)_{t \geq 0}$ that is a Markov process on the submanifold $Y$ which is transversal to the orbits of $K$ and an angular part $(z_t)_{t \geq 0}$ that is a process on a general $K$-orbit which can be identified with the homogeneous space $K/M$, where $M$ is the isotropy subgroup of $K$ that is assumed to be the same for all elements $x \in X$.  Theorem 4 of \cite{L09} asserts that under suitable conditions the process $(x_t)_{t \geq 0}$ has the same distribution as $(B(a_t)y_t)_{t \geq 0}$, where the radial part $(y_t)_{t \geq 0}$ is a diffusion on $Y$, $(B_t)_{t \geq 0}$ is a Brownian motion on $K/M$ that is independent of $(x_t)_{t \geq 0}$, and $(a_t)_{t \geq 0}$ a
time-change that is adapted to the filtration generated by $(y_t)_{t \geq 0}$.

The present paper was motivated by our desire to understand better the structural features that give rise to skew-product decompositions of diffusions that are equivariant under the action of a group and what it is about the absence of these features which cause such a decomposition not to hold.  In attempting to do so, we read the paper \cite{L09}.  We found an apparent counterexample to the main result, Theorem 4 of that paper in which there is a decomposition of
the process into an autonomously Markov radial process on $Y$ and an angular part that is a Brownian motion on $K/M$ time-changed according to the radial process, but this Brownian motion is {\bf not}, contrary to the claim of \cite{L09}, independent of the radial process, see Section~\ref{s_counter} for an exposition of the counterexample.  This seeming contradiction appears because the assumption from \cite{L09} that $K/M$ is irreducible is not strong enough to ensure the nonexistence of a nonzero $M$-invariant tangent vector in the special case when, as in our construction, $K/M$ has dimension $1$.
It is
the nonexistence of such a tangent vector that is used in the
proof in \cite{L09} to deduce the independence of the radial process and the Brownian motion.  Professor Liao pointed out to us that \cite[Theorem~4]{L09} holds under the conditions in \cite{L09}
for $\text{dim}(K/M)> 1$
and that result also holds when $K/M$ has dimension
$1$ if we further assume that there is no $M$-invariant tangent vector.

An anonymous referee pointed out an even simpler counterexample to \cite[Theorem~4]{L09} which we present in Section~\ref{s:rotated}. Namely, one takes
\[
x_t= \Theta_t \begin{pmatrix}U_t\\V_t\end{pmatrix}
\]
where $\begin{pmatrix}U_t\\V_t\end{pmatrix}$ is a planar Brownian motion and $\Theta_t\in SO(2)$ is the matrix that represents rotation about the origin through an angle $t$. We show that in this case that there is no skew-product decomposition for a somewhat different (and perhaps less interesting) reason: the angular part of $(x_t)_{t\geq 0}$ cannot be written as a time-changed Brownian motion on the unit circle in the plane.
The apparent contradiction to \cite[Theorem~4]{L09}is again due to the irreducibility of  $K/M$ being inadequate to ensure the non-existence of an $M$ invariant tangent vector when $K/M$ has dimension $1$.

We present both of these counterexamples here because
they illustrate two rather different ways in which
things can go wrong.  The latter counterexample
shows that under what look like reasonable conditions one might fail to have a skew-product decomposition because the angular part can't be time-changed to be Brownian, whereas the former counterexample does involve an angular part that is a time-changed Brownian motion, but it is just that this Brownian motion isn't independent of the radial process.  We hope that by presenting these two examples we will
prompt further investigation into what general conditions lead to the subtle failure of the usual skew-product decomposition in the first counterexample and what ones lead to the grosser failure in the second counterexample.

The outline of the remainder of the paper is the following.

In Section \ref{s:BM} we check that the classical skew-product decomposition of planar Brownian motion fits in the setting from \cite{L09}, even though the proof of \cite[Theorem~4]{L09} does not, as we have noted, apply to ensure the existence of the skew-product decomposition when, as here, the dimension
of $K/M$ is $1$.

In Section~\ref{s:rotated} we describe the counterexample mentioned above of a planar Brownian motion that is rotated at a constant rate for which
the angular part is not a time-changed Brownian motion on the unit circle in the plane.

In Section \ref{s_counter} we construct the counterexample of a diffusion for which the angular part is a time-changed Brownian motion on the appropriate homogeneous space, but this Brownian motion is not independent of the radial part.  Here the diffusion $(x_t)_{t \geq 0}$ has state space the manifold of $2 \times 2$ matrices that have a positive determinant.  This diffusion can be represented via the well-known QR decomposition as the product of an
autonomously Markov
``radial'' process $(T_t)_{t\geq 0}$ on the manifold of $2 \times 2$ upper-triangular matrices with positive diagonal entries
and a time-changed ``angular'' process $(U_{R_t})_{t \geq 0}$, where $(U_t)_{t \ge 0}$ is a Brownian motion on the group $SO(2)$ of $2 \times 2$ orthogonal matrices with determinant one and the time-change $(R_t)_{t \ge 0}$ is adapted to the
filtration of the radial process.  However,  the processes $(U_t)_{t \geq 0}$ and $(T_t)_{t \geq 0}$ are {\bf not} independent.

We end this introduction by noting that analogous  skew-product decompositions of superprocesses  have been studied in \cite{P91, EM91, H00}. The continuous Dawson-Watanabe (DW) superprocess is a rescaling limit of a system of branching Markov processes while the Fleming-Viot (FV) superprocess is a rescaling limit of the empirical distribution of a system of particles undergoing Markovian motion and multinomial resampling. It
is shown in \cite{EM91} that a FV process is a DW process conditioned to have total mass one. More generally, it is demonstrated in \cite{P91} that the distribution of the DW process conditioned on the path of its total mass process is equal to the distribution of a time-change of a FV process that has a suitable underlying time-inhomogeneous Markov motion. The latter result is extended
to measure-valued processes that may have jumps in \cite{H00}.

A sampling of other results involving skew-products can be found
in  \cite{Tay92,La09,El10, Ba06}.

\section{Example 1: Planar Brownian motion}\label{s:BM}

Let $(x_t)_{t \ge 0}$ be a planar Brownian motion.

Following the notation of \cite{L09}, we consider the following set-up.
\begin{enumerate}
\item Let $X=\R^2 \setminus \{(0,0)^T\}$.
\item Let $K$ be the Lie group $SO(2)$ of $2\times2$ orthogonal matrices
 with determinant $1$.  This group acts on $X$ by $A \mapsto Q^{-1} A$ for
 $Q \in K$ and $A \in X$.
\item The quotient of $X$ with respect to the action of $K$ can be identified with the positive $x$ axis. Note that the orbits of $K$ are just circles centered at the origin.
\item The isotropy subgroup of $K$ for an element $x \in X$ is, as usual, the
subgroup $\{k \in K : kx = x\}$.  Since every element of $X$ is an invertible
matrix, this subgroup is always the trivial group
consisting of just the identity.
In particular, this subgroup is the same for every
$y$ in the interior of $Y$, as required in \cite[pg~168]{L09}.
We denote this subgroup by $M$.
\end{enumerate}

It is straightforward to check that  $(x_t)_{t \ge 0}$ satisfies
all the assumptions of \cite[Theorem~4]{L09}. We refer the reader to Sections \ref{s:rotated} and \ref{s_counter} for details of how to verify these assumptions in more complicated examples.
\begin{remark}\label{r:BM}
In this example, $\text{dim}(K/M)=1$ and there is the skew-product decomposition \eqref{e:BM_skew}.
\end{remark}

\section{Example 2: Rotated planar Brownian motion}\label{s:rotated}
Write $((U_t,V_t)^T)_{t \ge 0}$
for a planar Brownian started from $(x,y)^T$ (where $T$ denotes transpose, so we are
thinking of column vectors).  The process $(x_t)_{t\geq 0}:=\left((x_t^1,x_t^2)^T\right)_{t\geq 0}$ started from
$(x,y)^T$ is defined by
\begin{equation}\label{e:rotatedBM}
\begin{pmatrix}x_t^1\\x_t^2\end{pmatrix} = \Theta_t \begin{pmatrix}U_t\\V_t\end{pmatrix},
\end{equation}
where $\Theta_t$ is the matrix that represents rotating though an angle $t$.  Thus,
\begin{equation}
\begin{split}
x^1_t &= \cos(t) U_t - \sin(t) V_t\\
x^2_t &= \sin(t) U_t + \cos(t) V_t.
\end{split}
\end{equation}
Then,
\begin{equation*}
\begin{split}
dx^1_t &= \cos(t) dU_t - U_t \sin(t) dt - \sin(t) dV_t - V_t \cos(t) dt\\
dx^2_t &= \sin(t) dU_t + U_t \cos(t) dt + \cos(t) dV_t - V_t \sin(t) dt,
\end{split}
\end{equation*}
which becomes
\begin{equation*}
\begin{split}
dx^1_t &= \cos(t) dU_t - \sin(t) dV_t - Y_t dt\\
dx^2_t &= \sin(t) dU_t + \cos(t) dV_t + X_t dt.
\end{split}
\end{equation*}
If we define martingales $(B_t)_{t \ge 0}$
and $(C_t)_{t \ge 0}$ by
\[
dB_t = \cos(t) dU_t - \sin(t) dV_t
\]
and
\[
dC_t =  \sin(t) dU_t + \cos(t) dV_t,
\]
then $[B]_t = t$, $[C]_t = t$ and $[B,C]_t = 0$, so
the process $((B_t,C_t)^T)_{t \ge 0}$ is a
planar Brownian motion and the process $\left((x_t^1,x_t^2)^T\right)_{t\geq 0}$ satisfies
the SDE
\begin{equation}\label{e:SDE_rotatedBM}
\begin{split}
dx^1_t &= dB_t - Y_t dt\\
dx^2_t &= dC_t + X_t dt.
\end{split}
\end{equation}
Following the notation of \cite{L09}, we consider the following set-up.
\begin{enumerate}
\item Let $X=\R^2 \setminus \{(0,0)^T\}$.
\item Let $K$ be the Lie group $SO(2)$ of $2\times2$ orthogonal matrices
 with determinant $1$.  This group acts on $X$ by $A \mapsto Q^{-1} A$ for
 $Q \in K$ and $A \in X$.
\item The quotient of $X$ with respect to the action of $K$ can be identified with the positive $x$ axis. Note that the orbits of $K$ are just circles centered at the origin.
\item The isotropy subgroup of $K$ for an element $x \in X$ is, as usual, the
subgroup $\{k \in K : kx = x\}$.  Since every element of $X$ is an invertible
matrix, this subgroup is always the trivial group
consisting of just the identity.
In particular, this subgroup is the same for every
$y$ in the interior of $Y$, as required in \cite[pg~168]{L09}.
We denote this subgroup by $M$.
\item Let $(x_t)_{t \ge 0}$ be the $X$-valued process that is defined in \eqref{e:rotatedBM}.
\end{enumerate}

We now check that  $(x_t)_{t \ge 0}$ satisfies
all the assumptions of \cite[Theorem~4]{L09}.
These are as follows:
\begin{enumerate}
\item The process $(x_t)_{t \ge 0}$ is a Feller process with continuous
sample paths.
\item The distribution of $(x_t)_{t \ge 0}$ is equivariant under
the action of $K$.  That is, for $k \in K$ the distribution of  $(k x_t)_{t \ge 0}$
when $x_0 = x_*$  is the same as the distribution of
$(x_t)_{t \ge 0}$ when $x_0 = k x_*$ \cite[(2)]{L09}.

\item The set $Y$ is a submanifold of $X$ that is transversal to the action of $K$
\cite[(3)]{L09}.

\item For any $y \in Y^0$ (that is, the relative interior of $Y$ -- which
in this case is just $Y$ itself) $T_y X$,
the tangent space of $X$ at $y$, is the direct sum of tangent spaces
$T_y(Ky) \bigoplus T_y Y$  \cite[(5)]{L09}.
\item The homogeneous space $K/M$ is irreducible; that is, the action of $M$
on $T_o(K/M)$ (the tangent space at the coset $o$ containing the identity)
has no nontrivial invariant subspace  \cite[pg~177]{L09}.
\end{enumerate}
\vspace{5mm}

These assumptions are verified as follows:

\begin{enumerate}
\item This follows from the representation \eqref{e:SDE_rotatedBM}.

\item Since $\Theta_t\in SO(2)$ we have by \eqref{e:rotatedBM} that for any $Q\in SO(2)$
\[
Q x_t = Q \Theta_t\begin{pmatrix}U_t\\V_t\end{pmatrix}.
\]
Since $ Q \Theta_t \in SO(2)$ the condition holds because planar Brownian motion is equivariant under the action of $SO(2)$.

\item This is immediate.

\item $T_y(Ky) = \text{Span}\left\{ (0,1)^T\right\}$ and $T_y(Y) = \text{Span}\left\{(1,0)^T\right\}$ so that
\[
\R^2 = T_y X = T_y(Ky) \oplus T_y(Y)
\]

\item The tangent space of $Ky$ is one-dimensional so $K/M$ is irreducible.
\end{enumerate}
Consequently,  $(x_t)_{t \ge 0}$ satisfies all the hypotheses of \cite[Theorem~4]{L09}.

Write $(R_t)_{t\geq 0}$ for the radial process
\[
R_t := |(x_t^1, x_t^2)^T| = |(U_t, V_t)^T|,
\]
and let $(L_t)_{t\geq 0}$ be the angular part of
$((U_t, V_t)^T)_{t \ge 0}$.  We can think of $(L_t)_{t\geq 0}$ as living on the unit circle in the complex plane.  In
polar coordinates, we have
\[
x_t = (R_t, L_t \exp(it)).
\]
By the usual skew-product for planar Brownian motion recalled in \eqref{e:BM_skew} we have that $L_t =
\exp(i W_{T_t})$, where $W$ is a standard Brownian motion on the line
independent of $R$ and $T$ is a time-change defined
from $R$.  Therefore
\[
x_t = (R_t, \exp(i (W_{T_t}+t))).
\]

\begin{proposition}\label{p_mainprop_rotatedBM}
The process $(x_t)_{t\geq 0}$ cannot be written as
\[
x_t = (R_t, \exp(i Z_{S_t})),
\]
where $Z$ is a Brownian motion (possibly with drift) on the line
independent of $R $ and $S$ is an increasing process adapted to the filtration generated by $R$.
\end{proposition}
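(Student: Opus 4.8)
The plan is to argue by contradiction, using the explicit form of the angular part obtained just before the statement together with the two independence hypotheses. Suppose $(x_t)_{t\ge 0}$ could be written as $x_t=(R_t,\exp(iZ_{S_t}))$ with $Z$ a Brownian motion with drift $\mu$ on the line, $Z$ independent of $R$, and $S$ increasing and adapted to the filtration $\fil^R$ generated by $R$. Since the radial coordinates automatically agree, equating the angular coordinates with the representation $x_t=(R_t,\exp(i(W_{T_t}+t)))$ gives
\[
\exp\bigl(i(W_{T_t}+t)\bigr)=\exp\bigl(iZ_{S_t}\bigr),\qquad t\ge 0,\ \text{a.s.}
\]
First I would observe that $S$ must be continuous: at a jump time $t_0$ of $S$, continuity of $t\mapsto W_{T_t}+t$ would force $Z_{S_{t_0}}-Z_{S_{t_0-}}$ to lie in $2\pi\mathbb{Z}$, but conditionally on $R$ (which determines $S$) this is an increment of the $R$-independent process $Z$ over a deterministic positive length, hence a non-degenerate Gaussian, so it avoids the countable set $2\pi\mathbb{Z}$ almost surely; as $S$ has at most countably many jumps, it is continuous a.s. Then $t\mapsto W_{T_t}+t-Z_{S_t}$ is a continuous process taking values in the discrete set $2\pi\mathbb{Z}$, hence is constant; rearranging and using $T_0=0$,
\[
\widehat W_{T_t}+t=Z_{S_t}-Z_{S_0},\qquad t\ge 0,\ \text{a.s.,}
\]
where $\widehat W_u:=W_u-W_0$ is, by the classical skew-product \eqref{e:BM_skew}, a standard Brownian motion started at $0$ and independent of $R$.

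Next I would condition on the path of $R$ (legitimate via regular conditional distributions). Under this conditioning $T$ and $S$ become deterministic, finite, increasing functions of $t$, while $W$ and $Z$, being independent of $R$, retain their respective marginal laws; in particular every random variable occurring above is Gaussian with finite variance, hence integrable under the conditional law (this is why one conditions, since unconditionally $\E[T_t]=\infty$ for the two-dimensional Bessel process). Taking the conditional expectation of the last display: the left side has conditional mean $t$ (as $\widehat W_{T_t}$ has conditional mean $0$) and the right side has conditional mean $\mu(S_t-S_0)$, so $S_t=S_0+t/\mu$ for all $t$, and in particular $\mu>0$. Writing $Z_u=Z_0+\mu u+\overline W_u$ with $\overline W$ a standard Brownian motion independent of $R$ and substituting $S_t=S_0+t/\mu$, the linear terms cancel and one is left with
\[
\widehat W_{T_t}=\widetilde W_{t/\mu},\qquad t\ge 0,\ \text{a.s.,}
\]
where $\widetilde W_v:=\overline W_{S_0+v}-\overline W_{S_0}$ is again a standard Brownian motion independent of $R$ (here I use that $\fil^R_0$ is trivial, so $S_0$ is a constant).

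Finally I would take conditional second moments: $\E\bigl[(\widehat W_{T_t}-\widetilde W_{t/\mu})^2\mid\fil^R\bigr]=0$, while the two square terms have conditional expectations $T_t$ and $t/\mu$ and the cross term is bounded in absolute value by $(T_t\cdot t/\mu)^{1/2}$ by the conditional Cauchy--Schwarz inequality; hence $0=T_t+t/\mu-2\,\E[\,\cdot\,\mid\fil^R]\ge\bigl(T_t^{1/2}-(t/\mu)^{1/2}\bigr)^2\ge 0$, forcing $T_t=t/\mu$ for all $t$, a.s. (Alternatively, since $T$ is a continuous increasing $\fil^R$-adapted clock and $\widehat W$ is an independent Brownian motion, $\widehat W_{T_\cdot}$ is a continuous local martingale with $\la\widehat W_{T_\cdot}\ra_t=T_t$, which must equal $\la\widetilde W_{\cdot/\mu}\ra_t=t/\mu$.) But $T_t=\int_0^t R_s^{-2}\,ds$ with $R$ the two-dimensional Bessel radial part of the planar Brownian motion $(U,V)$, and $T_t=t/\mu$ for all $t$ would force $R_s\equiv\sqrt\mu$, which is impossible since $R$ is a non-degenerate diffusion. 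This contradiction proves the proposition.

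The step I expect to be the main obstacle is the identification of $T$ in the last paragraph, and more precisely the fact that the two auxiliary Brownian motions --- $W$, from the skew-product of $(U,V)$, and $\widetilde W$, from the martingale part of $Z$ --- are not assumed to have any prescribed joint law; only their separate independence from $R$ is available. The device of conditioning on $R$ and then using nothing but first moments (to pin $S$ down to a deterministic linear time-change) and the Cauchy--Schwarz bound on the cross term (to pin $T$ down to a deterministic linear time-change) is exactly what avoids needing the joint law of $(W,\widetilde W)$, and the same conditioning circumvents the lack of integrability of $T_t$. The remaining points --- continuity of $S$, triviality of $\fil^R_0$, and the legitimacy of the ``freezing'' and conditional Cauchy--Schwarz manipulations --- are routine.
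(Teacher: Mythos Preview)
Your argument is correct and reaches the same endpoint as the paper --- namely that $T_t$ would have to equal a deterministic linear function of $t$, which is impossible --- but you get there by a genuinely different mechanism. The paper's proof is a three-line sketch: writing $Z_t=\tilde Z_t+at$, it asserts without further comment that the putative representation forces $\tilde Z_{S_\cdot}=W_{T_\cdot}$, $S=T$, and $\exp(iaS_t)=\exp(it)$, whence $aT_t\equiv t\pmod{2\pi}$, absurd. The implicit justification is uniqueness of the continuous semimartingale decomposition together with matching of quadratic variations; the $2\pi\mathbb{Z}$ lift, the continuity of $S$, and the question of which filtration one is decomposing in are all swept under the rug.

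Your route via conditioning on $R$ trades the semimartingale-decomposition step for elementary Gaussian moment calculations, and in doing so it cleanly sidesteps the filtration issue and the non-integrability of $T_t$ that would obstruct an unconditional argument. One small simplification: once you have $\widehat W_{T_t}=\widetilde W_{t/\mu}$ as an almost-sure identity of random variables, there is no need for Cauchy--Schwarz on the cross term --- just take conditional second moments of both sides directly to get $T_t=t/\mu$. (Your alternative via pathwise quadratic variation is also legitimate, since quadratic variation is a pathwise limit and does not depend on the choice of filtration.) Either way, what your approach buys is a self-contained, fully justified argument that does not rely on the reader supplying the semimartingale bookkeeping; what the paper's approach buys is brevity.
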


\begin{proof}
If such a representation was possible, then we would have $Z_t = \tilde Z_t + a t$ for some constant $a \in \R$, where $\tilde Z_t$ is a standard Brownian motion. This would imply that
\begin{equation*}
\begin{split}
\tilde Z &= W\\
S &= T\\
\exp(i a S_t) &= \exp(i t).
\end{split}
\end{equation*}
However, this is not possible:  it would mean that
\[
\exp(i t) = \exp(i a T_t),
\]
but $T_t$ is certainly not a constant multiple of $t$
for all $t \ge 0$.
\end{proof}

\begin{remark}\label{r:rotated}
In this example $K/M$ is the unit circle, which has dimension $1$, and there is no skew-product decomposition. The angular part cannot be written as the time-change of any Brownian motion on the unit circle.
\end{remark}

\section{Example 3: A matrix valued process}\label{s_counter}

Recall the well-known QR decomposition which says that any square matrix
can be written as the product of an orthogonal matrix  and an
upper triangular matrix, and that this decomposition is unique for invertible matrices if we require  the diagonal entries in the
upper triangular matrix to be positive (see, for example, \cite{horn}).
This decomposition is essentially a special case of the Iwasawa decomposition
for semisimple Lie groups.

In the $2 \times 2$ case, uniqueness also holds for QR decomposition of invertible matrices if we require the orthogonal matrix to have determinant one and there are simple
explicit formulae for the factors.  Indeed,
if
\begin{equation}\label{eq_qrfact}
A = \left(\begin{matrix}a&b\\ c&d\end{matrix}\right)
\end{equation}
and $\det A = ad - bc \ne 0$,
then $A = \tilde Q \tilde R$, where
\begin{equation}\label{e:Q1}
 \tilde{Q}=\frac{1}{\sqrt{a^2+c^2}} \left(\begin{matrix}a&-c\\ c&a\end{matrix}\right) \in SO(2)
\end{equation}
and
\begin{equation}\label{e:R1}
\tilde{R}=\left(\begin{matrix}\sqrt{a^2 + c^2}&\frac{ab + cd}{\sqrt{a^2 + c^2}}\\ 0& \frac{ad-bc}{\sqrt{a^2 + c^2}}\end{matrix}\right).
\end{equation}

In this setting, we consider a $2 \times 2$ matrix of independent Brownian motions and time-change it to produce a Markov process with the property that if the determinant is positive at time $0$, then it stays positive at all times.  This ensures that uniqueness of the $QR$-factorization holds at all times and also that the time-changed process falls into the setting of \cite{L09}.

Following the notation of \cite{L09}, we consider the following set-up.
\begin{enumerate}
\item Let $X$ be the manifold of $2\times2$ matrices over $\R$
with strictly positive determinant equipped with the topology it inherits
as an open subset of $\R^{2 \times 2} \cong \R^4$.
\item Let $K$ be the Lie group $SO(2)$ of $2\times2$ orthogonal matrices
 with determinant $1$.  This group acts on $X$ by $A \mapsto Q^{-1} A$ for
 $Q \in K$ and $A \in X$.
\item The quotient of $X$ with respect to the action of $K$ can,
via the QR decomposition, be identified with
the set $Y$ of upper triangular $2\times2$ matrices with
strictly positive diagonal entries.
\item The isotropy subgroup of $K$ for an element $x \in X$ is, as usual, the
subgroup $\{k \in K : kx = x\}$.  Since every element of $X$ is an invertible
matrix, this subgroup is always the trivial group
consisting of just the identity.
In particular, this subgroup is the same for every
$y$ in the interior of $Y$, as required in \cite[pg~168]{L09}.
We denote this subgroup by $M$.
\item Let $(x_t)_{t \ge 0}$ be the $X$-valued process that satisfies
 the stochastic differential equation (SDE)
\begin{equation}\label{e_SDE}
    dx_t =\left(\begin{matrix} dx^{1,1}_t&dx^{1,2}_t\\dx^{2,1}_t&dx^{2,2}_t\end{matrix} \right) = \left(\begin{matrix} f(x_t) \, dA^{1,1}_t& f(x_t) \, dA^{1,2}_t\\f(x_t) \, dA^{2,1}_t & f(x_t) \, d A^{2,2}_t \end{matrix} \right), \quad x_0 \in X,
\end{equation}
where $A^{1,1}_t$, $A^{1,2}_t$, $A^{2,1}_t$, and $ A^{2,2}_t$ are independent standard one-dimensional Brownian motions, and $f(x):= \frac{\det(x)}{\text{tr}(x'x)+1}$ with $\det$ and $\text{tr}$ denoting the determinant and the trace.
We establish below that \eqref{e_SDE} has a unique strong
solution and that this
solution does indeed take values in $X$.
\end{enumerate}

It follows from the QR decomposition
that $x_t = Q_t T_t$, where, in the terminology of \cite{L09},
the ``angular part'' $Q_t$ belongs to $K$ and the ``radial part''
$T_t$ belongs to $Y$.
We will show that $(T_t)_{t \ge 0}$ is an autonomous diffusion
on $Y$ and that $Q_t = U_{R_t}$, where $(U_t)_{t \ge 0}$
is a Brownian motion on $K$ and $(R_t)_{t \ge 0}$ is an increasing process adapted to
the filtration generated by $(T_t)_{t \ge 0}$.
However, we will establish that \textbf{it is not possible} to take the
Brownian motion $(U_t)_{t \ge 0}$ to be independent of
the process $(T_t)_{t \ge 0}$.
This will contradict the claim of \cite[Theorem~4]{L09} once we have
also checked that the conditions of that result hold.

Note that if we consider $f$ as a function on the space
$\R^{2 \times 2} \cong \R^4$ of all $2 \times 2$ matrices, then it
has bounded partial derivatives, and hence it is globally
Lipschitz continuous.  Consequently, if we allow the initial
condition in $\eqref{e_SDE}$ to be an arbitrary element of $\R^{2 \times 2}$,
then the resulting SDE has a unique strong solution (see, for example, \cite[Ch~5,~Thm~11.2]{RW00}).  Moreover, the resulting process
is a Feller process on $\mathbb{R}^{2 \times 2}$
(see, for example, \cite[Ch~5, Thm~22.5]{RW00}).

We now check that $(x_t)_{t \ge 0}$ actually takes values in $X$.
That is, we show that if $x_0$ has positive determinant, then $x_t$ also has positive determinant for all $t \ge 0$.  It follows from It\^o's Lemma that
\begin{equation*}
    [\det(x_\cdot)]_t = \int_0^t \mathrm{tr}(x_s'x_s)f^2(x_s)\,ds,
\end{equation*}
\begin{equation*}
    [\mathrm{tr}(x_\cdot'x_\cdot)]_t = \int_0^t4\mathrm{tr}(x_s'x_s)f^2(x_s)\,ds,
\end{equation*}
and
\begin{equation*}
     [\det(x_\cdot),\mathrm{tr}(x_\cdot'x_\cdot)] = \int_0^t4\det(x_s)f^2(x_s)\,ds.
\end{equation*}
Thus,  $((\det(x_t), \mathrm{tr}(x_t'x_t)))_{t \ge 0}$ is
a Markov process and there exist independent standard one-dimensional Brownian motions
$(B^1_t)_{t \ge 0}$ and $(B^2_t)_{t \ge 0}$ such that
\[
d \, \det(x_t) = \sqrt{\mathrm{tr}(x_t'x_t)} f(x_t) \, dB^1_t
\]
and
\[
\begin{split}
d \, \mathrm{tr}(x_t'x_t) & = \frac{4\det(x_t) f(x_t)}{\sqrt{\mathrm{tr}(x_t'x_t)}} \, dB^1_t + \sqrt{\frac{4 \mathrm{tr}^2(x_t'x_t) - 16\det(x_t)^2} {\mathrm{tr}(x_t'x_t)}}f(x_t) \, dB^2_t \\
& \quad + 4f^2(x_t) \, dt.
\end{split}
\]
When we substitute for $f$, the above equations transform into
\[
d \, \det(x_t) = \frac{\det(x_t) \sqrt{\mathrm{tr}(x_t'x_t)}}{\mathrm{tr}(x_t'x_t)+1} \, dB^1_t
\]
and
\[
\begin{split}
d \, \mathrm{tr}(x_t'x_t) &=
\frac{4(\det(x_t))^{2}}{\sqrt{\mathrm{tr}(x_t'x_t)} (\mathrm{tr}(x'x)+1)} \, dB^1_t + \sqrt{\frac{4 \mathrm{tr}^2(x_t'x_t) - 16\det(x_t)^2} {\mathrm{tr}(x_t'x_t)}}\frac{\det(x_t)}{\mathrm{tr}(x_t'x_t)+1} \, dB^2_t \\
& \quad + 4 \left( \frac{\det(x_t)}{\mathrm{tr}(x_t'x_t)+1} \right)^2 \, dt. \\
\end{split}
\]
In particular,  the process
$(\det(x_t))_{t \ge 0}$ is the stochastic exponential
of the local martingale $(M_t)_{t \ge 0}$, where
\[
M_t = \int^t_0 \frac{\sqrt{\mathrm{tr}(x_s'x_s)}}{\mathrm{tr}(x_s'x_s)+1} \, dB^1_s.
\]
Since $x_0 \in X$, we have $\det(x_0) > 0$, and hence
\[
\det(x_t) = \det (x_0) \exp\left(M_t-M_0-\frac{1}{2}[M]_t\right)
\]
 is strictly positive for all $t \ge 0$. This shows that
$(x_t)_{t \ge 0}$ takes values in $X$.

We now check that  $(x_t)_{t \ge 0}$ satisfies
all the assumptions of \cite[Theorem~4]{L09}.
These are as follows:
\begin{enumerate}
\item The process $(x_t)_{t \ge 0}$ is a Feller process with continuous
sample paths.
\item The distribution of $(x_t)_{t \ge 0}$ is equivariant under
the action of $K$.  That is, for $k \in K$ the distribution of  $(k x_t)_{t \ge 0}$
when $x_0 = x_*$  is the same as the distribution of
$(x_t)_{t \ge 0}$ when $x_0 = k x_*$ \cite[(2)]{L09}.

\item The set $Y$ is a submanifold of $X$ that is transversal to the action of $K$
\cite[(3)]{L09}.

\item For any $y \in Y^0$ (that is, the relative interior of $Y$ -- which
in this case is just $Y$ itself) $T_y X$,
the tangent space of $X$ at $y$, is the direct sum of tangent spaces
$T_y(Ky) \bigoplus T_y Y$  \cite[(5)]{L09}.
\item The homogeneous space $K/M$ is irreducible; that is, the action of $M$
on $T_o(K/M)$ (the tangent space at the coset $o$ containing the identity)
has no nontrivial invariant subspace  \cite[pg~177]{L09}.
\end{enumerate}
\vspace{5mm}

The verifications of (1)--(5) proceed as follows:
\begin{enumerate}
\item We have already observed that solutions of \eqref{e_SDE}
with initial conditions in $\R^{2 \times 2}$ form a Feller process
and that this process stays in the open set $X$ if it starts in $X$,
and so $(x_t)_{t \ge 0}$ is a Feller process on $X$.

\item  Suppose that $(x_t)_{t \ge 0}$ is a solution of \eqref{e_SDE}
with $x_0 = x_*$ and $(\hat x_t)_{t \ge 0}$ is a solution of \eqref{e_SDE}
with $\hat x_0 = k x_*$ for some $k \in K$.  We have to show that if
we set $\tilde x_t = k^{-1} \hat x_t$, then $(\tilde x_t)_{t \ge 0}$
has the same distribution as $(x_t)_{t \ge 0}$.  Note that
$\det \tilde x_t = \det \hat x_t$ and
$\tilde x'_t \tilde x_t = \hat x_t' \hat x_t$, so that
$f(\tilde x_t) = f(\hat x_t)$.  Thus,
\[
d \tilde x_t =
f(\tilde x_t)
k^{-1}
\begin{pmatrix}
dA^{1,1}_t & dA^{1,2}_t\\
dA^{2,1}_t & dA^{2,2}_t
\end{pmatrix}, \quad \tilde x_0=x_*.
\]
Now the columns of the matrix
\[
\begin{pmatrix} A^{1,1}_t & A^{1,2}_t \\ A^{2,1}_t& A^{2,2}_t \end{pmatrix}
\]
are independent standard two-dimensional Brownian motions, and so the
same is true of the columns of the matrix
\[
k^{-1}
\begin{pmatrix} A^{1,1}_t & A^{1,2}_t \\ A^{2,1}_t& A^{2,2}_t \end{pmatrix}
\]
by the equivariance of standard two-dimensional Brownian motion under the action of $SO(2)$.
Hence,
\[
k^{-1}
\begin{pmatrix} A^{1,1}_t & A^{1,2}_t \\ A^{2,1}_t & A^{2,2}_t \end{pmatrix}  =
\begin{pmatrix} \alpha^{1,1}_t & \alpha^{1,2}_t \\ \alpha^{2,1}_t & \alpha^{2,2}_t \end{pmatrix},
\]
where
$(\alpha^{1,1}_t)_{t \ge 0}$, $(\alpha^{1,2}_t)_{t \ge 0}$,
$(\alpha^{2,1}_t)_{t \ge 0}$, and $(\alpha^{2,2}_t)_{t \ge 0}$
are independent standard Brownian motions.
Since,
\begin{equation*}
d\tilde{x}_t =
f(\tilde{x}_t) \begin{pmatrix} d\alpha^{1,1}_t & d\alpha^{1,2}_t \\ d\alpha^{2,1}_t & d\alpha^{2,2}_t \end{pmatrix}, \quad \tilde{x_0} = x_0,
\end{equation*}
the existence and uniqueness of strong solutions to \eqref{e_SDE}
establishes that the distributions of $(x_t)_{t \ge 0}$ and
$(\tilde{x}_t)_{t \ge 0}$ are equal.

\item  It follows from the existence of the
$QR$ decomposition for invertible matrices that $X$
is the union of the orbits $Ky$ for $y \in Y$, and it follows
from the uniqueness of the decomposition for such matrices
that the orbit $Ky$ intersects $Y$ only at $y$.

\item  Since the tangent space of $K=SO(2)$ at the identity is the
vector space of $2 \times 2$ skew-symmetric matrices and the tangent space of
$Y$ at the identity is the vector space of $2 \times 2$ upper-triangular matrices,
we have to show that if $W$ is a fixed invertible
upper-triangular $2 \times 2$ matrix and $M$ is a fixed $2 \times 2$ matrix, then
\begin{equation*}
M = S W + V
\end{equation*}
for a unique skew-symmetric $2 \times 2$ matrix $S$ and unique upper-triangular $2 \times 2$ matrix $V$.
Let
\begin{equation*}
M:=\begin{pmatrix} m_{11}&m_{12}\\m_{21}&m_{22}\end{pmatrix}
\quad \text{and} \quad
W:=
\begin{pmatrix} w_{11}&w_{12}\\0&w_{22}\end{pmatrix}.
\end{equation*}
It is immediate that
\begin{equation*}
S =
\begin{pmatrix} 0 &-\frac{m_{21}}{w_{11}}\\\frac{m_{21}}{w_{11}}&0 \end{pmatrix}
\end{equation*}
and
\begin{equation*}
V = \begin{pmatrix} m_{11} &\frac{m_{12} w_{11} + m_{21} w_{22}}{w_{11}}\\0&\frac{m_{22} w_{11} - m_{21} w_{12}}{w_{11}}\end{pmatrix}.
\end{equation*}

\item We have already noted that the
 tangent space of $K$ at the identity is the vector space of
skew-symmetric $2\times2$ matrices.  This vector space
is one-dimensional and
so this condition holds trivially.

\end {enumerate}

We have now shown that  $(x_t)_{t \ge 0}$ satisfies all the hypotheses of \cite[Theorem~4]{L09}.  However, we have the following result.

\begin{proposition}\label{p_mainprop}
In the decomposition $x_t= Q_t T_t$ the
$Y$-valued process $(T_t)_{t \ge 0}$ is Markov and
the $K$-valued process $(Q_t)_{t \ge 0}$
may be written as
$Q_t = U_{R_t}$,
where $(U_t)_{t \ge 0}$ is a $K$-valued Brownian motion and
$(R_t)_{t \ge 0}$ is an increasing continuous process
such that $R_0 = 0$ and $R_t - R_s$ is
$\sigma\{T_u : s \le u \le t\}$-measurable for
$0 \le s < t < \infty$. However, there is no such representation
in which  $(T_t)_{t \ge 0}$ and $(U_t)_{t \ge 0}$  are independent.
\end{proposition}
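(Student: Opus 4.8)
The plan is to introduce explicit coordinates through the formulas \eqref{e:Q1}--\eqref{e:R1}. Write $x_t=\left(\begin{matrix}a_t&b_t\\ c_t&d_t\end{matrix}\right)$; by \eqref{e_SDE} the first column $(a_t,c_t)^T$ is a time-changed planar Brownian motion and so never vanishes, whence $Q_t$ is the rotation through a well-defined continuous real-valued process $\phi_t:=\arg(a_t+ic_t)$, with $\phi_0=0$ whenever $x_0\in Y$, and the radial part has entries $(T_t)_{11}=\sqrt{a_t^2+c_t^2}$, $(T_t)_{12}=(a_tb_t+c_td_t)/(T_t)_{11}$, $(T_t)_{22}=\det(x_t)/(T_t)_{11}$. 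I would apply It\^o's formula to these functions of $x_t$, using that applying the adapted rotation $Q_t^{-1}$ to each column of the matrix $(A^{i,j}_t)$ of driving Brownian motions again produces four independent standard Brownian motions, two of which I call $(N_t)_{t\ge0}$ and $(N'_t)_{t\ge0}$. The computation yields, first, that $(T_t)_{t\ge0}$ satisfies a closed (autonomous) system of stochastic differential equations on $Y=\{(T)_{11}>0,(T)_{22}>0\}$ — the coefficients depend on $x_t$ only through $T_t$, precisely because $f(x_t)=\det(T_t)/(\text{tr}(T_t'T_t)+1)$ since $Q_t\in SO(2)$ — and since $(x_t)$ is Feller on $X$ with $\det x_t>0$ for all $t$ the radial part stays in $Y$, so with the (locally Lipschitz) coefficients this identifies $(T_t)_{t\ge0}$ as the unique strong solution of that system, in particular as a Markov process. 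Second, it yields
\[
d\phi_t=\frac{f(x_t)}{(T_t)_{11}}\,dN_t,\qquad d(T_t)_{22}=-\frac{f(x_t)(T_t)_{12}}{(T_t)_{11}}\,dN_t+f(x_t)\,dN'_t-(\cdots)\,dt .
\]

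From the first equation $\phi$ is a continuous local martingale with $[\phi]_t=\int_0^t\bigl((T_s)_{22}/(\text{tr}(T_s'T_s)+1)\bigr)^2\,ds=:R_t$; the integrand depending only on $T_s$, the process $R$ is a functional of the path of $T$, is continuous and increasing with $R_0=0$, and $R_t-R_s$ is $\sigma\{T_u:s\le u\le t\}$-measurable. The Dambis--Dubins--Schwarz theorem (after the usual enlargement of the probability space if $R_\infty<\infty$) gives a standard one-dimensional Brownian motion $W$ with $\phi_t=W_{R_t}$; since $\phi_0=W_{R_0}=0$ and both are continuous this holds as real processes, so $Q_t=U_{R_t}$ with $U_t$ the rotation through $W_t$, which is a Brownian motion on $K=SO(2)$. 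This gives the first two assertions.

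For the last assertion, suppose for contradiction that $Q_t=U_{R_t}$ for some Brownian motion $(U_t)_{t\ge0}$ on $K$ that is independent of $(T_t)_{t\ge0}$, with $R$ adapted to the filtration of $T$. A Brownian motion on $SO(2)$ is the rotation through $\psi_t=\widehat W_t+\mu t$ for a standard one-dimensional Brownian motion $\widehat W$ and a constant $\mu$ (with $\mu=0$ if drift is excluded), so continuous lifting gives $\phi_t=\widehat W_{R_t}+\mu R_t$, where $\widehat W$ is independent of $T$ and hence of both the process $(T_\cdot)_{22}$ and the time change $R$. Now compare the covariation $[(T_\cdot)_{22},\phi]$ computed from the second displayed equation, namely $-\int_0^t f(x_s)^2(T_s)_{12}/(T_s)_{11}^2\,ds=-\int_0^t(T_s)_{22}^2(T_s)_{12}/(\text{tr}(T_s'T_s)+1)^2\,ds$, with the one coming from the hypothetical representation. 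Choosing $x_0\in Y$ with $(x_0)_{12}\ne0$, so that $(T_0)_{12}\ne0$, the former is strictly nonzero for all small $t>0$, since $(T_s)_{12}$ is continuous and $(T_s)_{22}=\det(x_s)/(T_s)_{11}>0$. The latter equals $[(T_\cdot)_{22},\widehat W_R]+[(T_\cdot)_{22},\mu R]$, where $[(T_\cdot)_{22},\mu R]=0$ because $R$ has finite variation, while $[(T_\cdot)_{22},\widehat W_R]=0$ because $\widehat W$ is independent of $T$: conditionally on the path of $T$, the process $(T_\cdot)_{22}$ is a fixed continuous path of finite quadratic variation, $R$ a fixed continuous increasing function, and $\widehat W_{R_\cdot}$ a time-changed Brownian motion, so the conditional expectation of the square of the covariation sum along a partition of $[0,t]$ equals $\sum_i\bigl((T_{t_{i+1}})_{22}-(T_{t_i})_{22}\bigr)^2\bigl(R_{t_{i+1}}-R_{t_i}\bigr)\le\bigl(\max_i(R_{t_{i+1}}-R_{t_i})\bigr)\sum_i\bigl((T_{t_{i+1}})_{22}-(T_{t_i})_{22}\bigr)^2\to0$ as the mesh shrinks. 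Thus the hypothetical representation forces $[(T_\cdot)_{22},\phi]\equiv0$, a contradiction.

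The step I expect to be the main obstacle is this last covariation argument, specifically the vanishing of $[(T_\cdot)_{22},\widehat W_R]$: conditionally on $T$ the process $(T_\cdot)_{22}$ is deterministic but of infinite variation (it is a genuine semimartingale path), so one cannot simply invoke ``finite variation against a martingale contributes nothing'', and must instead exploit its finite quadratic variation together with the continuity of the time change $R$, as in the estimate above. Minor points to handle carefully are that the angular parametrization is legitimate — $\phi$ is a bona fide continuous real semimartingale and $Q_t=U_{R_t}$ forces $\phi_t=\widehat W_{R_t}+\mu R_t$ as \emph{real} processes rather than merely modulo $2\pi$, which follows from continuity and $\phi_0=0$ — and that permitting a drift $\mu$ is harmless because $\mu R$ is continuous of finite variation and drops out of every covariation computed above.
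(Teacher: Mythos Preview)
Your derivation of the representation $Q_t=U_{R_t}$ with $R_t=\int_0^t(f(T_s)/(T_s)_{11})^2\,ds$ is essentially the paper's argument: the paper also treats $x^{11}_t+ix^{21}_t$ as an isotropic complex local martingale, applies Dambis--Dubins--Schwarz, and obtains the same closed SDE system for $T$ driven by the rotated Brownian motions you call $N,N'$ (the paper's $w^2,w^4$). Up to an inconsequential sign in the $dN_t$-coefficient of $d(T_t)_{22}$, the calculations match.

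Your contradiction argument, however, differs from the paper's and is somewhat more direct. The paper invokes an auxiliary time-change proposition to pass from the SDE for $U_t$ to an SDE for $Q_t=U_{R_t}$ valid under the assumed independence, then computes $d(Q_tT_t)^{1,1}$ by the It\^o product rule, and finds a nonzero finite-variation part, contradicting the fact that $x^{1,1}_t$ is a local martingale. You instead compare the covariation $[(T_\cdot)_{22},\phi]$: from the explicit SDEs it equals $\int_0^t f(T_s)^2(T_s)_{12}/(T_s)_{11}^2\,ds$, nonzero once $(T_0)_{12}\ne0$, while independence of $\widehat W$ and $T$ forces $[(T_\cdot)_{22},\widehat W_{R_\cdot}]\equiv0$ via your conditional-variance estimate. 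Both arguments ultimately detect the same obstruction---the Brownian motion $N=w^2$ that drives the angular part $\phi$ also appears in $dT^{22}$---but yours isolates this directly at the level of a single covariation, whereas the paper routes the same information through a drift mismatch and needs its Proposition~\ref{p_timechange} to justify that, under independence, the $(\mathcal{G}_t)$-martingales driving $T$ have zero covariation with the time-changed $(\mathcal{F}_t)$-Brownian motion driving $Q$. Your route is shorter; the paper's has the advantage that the filtration bookkeeping is packaged into a reusable lemma, so the step you flag as the main obstacle (ensuring the partition sums for $[(T_\cdot)_{22},\widehat W_{R_\cdot}]$ converge to zero when $(T_\cdot)_{22}$ is a genuine infinite-variation path) is handled there once and for all.
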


\begin{proof}
For all $t \ge 0$ we have $x_t=Q_tT_t$, where
\begin{equation*}
Q_t=\frac{1}{\sqrt{(x^{11}_t)^2+(x^{21}_t)^2}}
\begin{pmatrix} x^{11}_t &-x^{21}_t \\ x^{21}_t&x^{11}_t\end{pmatrix}
\in K
\end{equation*}
and
\begin{equation*}
T_t=\begin{pmatrix}\sqrt{(x^{11}_t)^2+(x^{21}_t)^2}&\frac{x^{11}_t x^{12}_t +  x^{21}_t x^{22}_t}{\sqrt{(x^{11}_t)^2+(x^{21}_t)^2}}\\ 0& \frac{\det (x_t)}{\sqrt{(x^{11}_t)^2+(x^{21}_t)^2}}\end{pmatrix}
\in Y.
\end{equation*}

Note that $\det(x_t) = \det(T_t)$ and $\mathrm{tr}(x_t'x_t) = \mathrm{tr}(T_t'T_t)$, and so $f(x_t) = f(T_t)$.
Note also that the complex-valued process
$(x^{11}_t + i x^{21}_t)_{t \ge 0}$ is an isotropic complex
local martingale
in the sense of \cite[Ch 18]{K01}, that is
\[
[x^{11}]=[x^{21}]
\]
and
\[
[x^{22},x^{21}]=0.
\]
In our case

\[
d[x^{11}]_t = d[x^{21}]_t = f^2(T_t) \, dt.
\]
By \cite[Thm~18.5]{K01}, $(\log(x^{11}_t + i x^{21}_t)_{t \ge 0}$
is a well-defined isotropic complex local martingale that can
be written as
\[
\log(x^{11}_t + i x^{21}_t)  = \log \left(  T^{11}_t \right) + i \theta_t,
\]
where
\begin{equation*}
 d[\theta]_t = d[\log(T^{11})]_t= \frac{1}{(T^{11}_t)^2}d[x^{11}]_t  = \left( \frac{f(T_t)}{T^{11}_t} \right)^2 \, dt.
\end{equation*}

By the classical result of Dambis, Dubins and Schwarz
(see, for example, \cite[Thm~18.4]{K01}),
there exists a standard complex Brownian motion
$(\tilde{B}_t + iB_t)_{t \ge 0}$ such that $\log(x^{11}_t + i x^{21}_t) = \tilde{B}_{R_t} + iB_{R_t}$, where
\[
R_t = \int^t_0 \left( \frac{f(T_s)}{T_s^{11}} \right)^2 \, ds, \quad t \ge 0.
\]
So, $\theta_t = B_{R_t}$  and $\log(T^{11}_t) = \tilde{B}_{R_t}$.  Hence,
\[
\frac{x^{11}_t + i x^{21}_t}{\sqrt{(x^{11}_t)^2 + (x^{21}_t)^2}} =  \left( \cos(\theta_t) +  i \sin(\theta_t) \right)
\]
and
\begin{equation*}
Q_t  = \begin{pmatrix} \cos(B_{R_t} )& -\sin(B_{R_t})\\ \sin(B_{R_t})&  \cos(B_{R_t})\end{pmatrix}.
\end{equation*}
Consequently, $Q_t = U_{R_t}$, where
\begin{equation*}
U_t  = \begin{pmatrix} \cos(B_t )& -\sin(B_t)\\ \sin(B_t)&  \cos(B_t)\end{pmatrix},
\end{equation*}
and $(B_t)_{t \ge 0}$ is a standard one-dimensional Brownian motion.

Note that $(U_t)_{t \ge 0}$ is certainly a Brownian motion on $K=SO(2)$,
and so we have uniquely identified the $K$-valued Brownian motion
$(U_t)_{t \ge 0}$ and the increasing process $(R_t)_{t \ge 0}$
that appear in the claimed decomposition of $(x_t)_{t \ge 0}$.

To complete the proof, it suffices to suppose that
$(U_t)_{t \ge 0}$ is independent of $(T_t)_{t \ge 0}$
and obtain a contradiction.
An application of It\^o's Lemma shows that the entries of
$(U_t)_{t \ge 0}$ satisfy the system of SDEs
\begin{eqnarray*}\label{e_USDE}
dU^{1,1}_t &=& - U^{2,1}_t \, dB_t - \frac{1}{2}U^{1,1}_t \, dt\\
dU^{2,1}_t &=& U^{1,1}_t \, dB_t - \frac{1}{2}U^{2,1}_t \, dt\\
dU^{1,2}_t &=& -U^{1,1}_t \, dB_t + \frac{1}{2}U^{2,1}_t \, dt=-dU^{2,1}_t\\
dU^{2,2}_t &=& - U^{2,1}_t \, dB_t - \frac{1}{2}U^{1,1}_t \, dt =  dU^{1,1}_t.
\end{eqnarray*}

We apply Proposition \ref{p_timechange} below to each of the four SDEs in the system describing $(U_t)_{t \ge 0}$,  with, in the notation of that result, $(\zeta_t, H_t, K_t)$ being the respective triples
$(U^{1,1}_t ,U^{2,1}_t ,U^{1,1}_t)$,
$(U^{2,1}_t ,U^{1,1}_t ,U^{2,1}_t)$,
$(U^{1,2}_t ,U^{1,1}_t ,U^{2,1}_t)$,
and $(U^{2,2}_t ,U^{2,1}_t ,U^{1,1}_t)$.
In each of the four applications, we let
\begin{itemize}
\item $(\mathcal{F}_t)_{t \ge 0}$ be the filtration generated by $(U_t)_{t \ge 0}$,
\item $(\mathcal{G}_t)_{t \ge 0}$ be the filtration generated by $(T_t)_{t \ge 0}$,
\item $\beta_t = B_t$,
\item $\rho_t = R_t$,
\item $J_t =  \left( \frac{f(T_t)}{T_t^{11}} \right)^2$,
\item $\gamma_t = W_t = \int^t_0 \sqrt{\frac{1}{R^{\prime}_s}}dB_{R_s}$.
\end{itemize}

Let $\mathcal{H}_t = \mathcal{F}_{\rho_t} \vee \mathcal{G}_t$, $t \ge 0$,
as in the Proposition \ref{p_timechange}.
It follows by  the assumed independence of
$(U_t)_{t \ge 0}$ and $(T_t)_{t \ge 0}$,
part (iii) of Proposition \ref{p_timechange},
and equation \eqref{e_USDE} that the entries of the
time-changed process $Q_t = U_{R_t}$ satisfy the system of SDEs
\begin{eqnarray*}
dQ^{1,1}_t &=& - Q^{2,1}_t  \sqrt{R'_t} \, dW_t - \frac{1}{2}Q^{1,1}_t  R'_t  \, dt = - Q^{2,1}_t  \frac{f(T_t)}{T^{11}_t} \, dW_t - \frac{1}{2}Q^{1,1}_t  \left(\frac{f(T_t)}{T^{11}_t}\right)^2 \, dt\\
dQ^{2,1}_t &=& Q^{1,1}_t  \sqrt{R'_t} \, dW_t - \frac{1}{2}Q^{2,1}_t  R'_t \, dt = Q^{1,1}_t  \frac{f(T_t)}{T^{11}_t} \, dW_t - \frac{1}{2}Q^{2,1}_t  \left(\frac{f(T_t)}{T^{11}_t}\right)^2 \, dt\\
dQ^{1,2}_t &=& -dQ^{2,1}_t = Q^{1,1}_t  \sqrt{R'_t} \, dW_t - \frac{1}{2}Q^{2,1}_t  R'_t \, dt =  Q^{1,1}_t  \frac{f(T_t)}{T^{11}_t} \, dW_t - \frac{1}{2}Q^{2,1}_t  \left(\frac{f(T_t)}{T^{11}_t}\right)^2 \, dt\\
dQ^{2,2}_t &=& dQ^{1,1}_t = - Q^{2,1}_t  \sqrt{R'_t} \, dW_t - \frac{1}{2}Q^{1,1}_t  R'_t = - Q^{2,1}_t  \frac{f(T_t)}{T^{11}_t} \, dW_t - \frac{1}{2}Q^{1,1}_t \left(\frac{f(T_t)}{T^{11}_t}\right)^2 \, dt.
\end{eqnarray*}

Set
\begin{eqnarray*}
dw^1_t &=& \frac{x^{11}_t}{\sqrt{(x^{11}_t)^2+(x^{21}_t)^2}} \, dA^{11}_t +  \frac{x^{21}_t}{\sqrt{(x^{11}_t)^2+(x^{21}_t)^2}} \, dA^{21}_t\\
dw^2_t &=& \frac{-x^{21}_t}{\sqrt{(x^{11}_t)^2+(x^{21}_t)^2}} \, dA^{11}_t +  \frac{x^{11}_t}{\sqrt{(x^{11}_t)^2+(x^{21}_t)^2}} \, dA^{21}_t\\
dw^3_t &=& \frac{x^{11}_t}{\sqrt{(x^{11}_t)^2+(x^{21}_t)^2}} \, dA^{12}_t +  \frac{x^{21}_t}{\sqrt{(x^{11}_t)^2+(x^{21}_t)^2}} \, dA^{22}_t\\
dw^4_t &=& \frac{-x^{21}_t}{\sqrt{(x^{11}_t)^2+(x^{21}_t)^2}} \, dA^{12}_t +  \frac{x^{11}_t}{\sqrt{(x^{11}_t)^2+(x^{21}_t)^2}} \, dA^{22}_t.
\end{eqnarray*}
The processes $(w^i_t)_{t \ge 0}$ are local martingales with $[w^i_t,w^j_t]_t = \delta_{ij}t$, and thus they are independent standard Brownian motions.  An application of It\^o's Lemma shows that $(T_t)_{t \ge 0}$ is a diffusion satisfying the following system of SDEs.
\begin{eqnarray*}
dT^{11}_t &=& f(T_t) \, dw^1_t + \frac{f^2(T_t)}{T^{11}_t} \, dt\\
dT^{12}_t &=&  \frac{T^{22}_t f(T_t)}{T^{11}_t} \, dw^2_t + f(T_t)dw^3_t - \frac{T^{12}_t f^2(T_t)}{2 (T^{11}_t)^2} \, dt\\
dT^{22}_t &=& \frac{T^{12}_t f(T_t)}{T^{11}_t} \, dw^2_t + f(T_t)dw^4_t - \frac{T^{22}_t f^2(T_t)}{2 (T^{11}_t)^2} \, dt.
\end{eqnarray*}

The assumed independence of the processes
$(U_t)_{t \ge 0}$ and $(T_t)_{t \ge 0}$ and part (iv) of Proposition \ref{p_timechange} give that $[Q^{i,j},T^{k,l}] \equiv 0$ for all $i,j,k$ and $l$.
It follows from It\^o's Lemma that

\begin{eqnarray*}
d(Q_t T_t)^{1,1} &=& d N_t +\frac{Q_t^{1,1}f^2(T_t)}{T^{1,1}_t}\left(1-\frac{1}{2T_t^{1,1}}\right) \, dt,\\
\end{eqnarray*}
where $(N_t)_{t \ge 0}$ is a continuous local martingale
for the filtration $(\mathcal{H}_t)_{t \ge 0}$.  This, however,
is not possible because
$(Q_t T_t)^{1,1} = x_t^{1,1}$
and the process $(x^{1,1}_t)_{t \ge 0}$ is a continuous local martingale
for the filtration $(\mathcal{H}_t)_{t \ge 0}$.

\end{proof}

We required the following proposition that collects together some simple
facts about time-changes.

\begin{proposition}\label{p_timechange}
Consider two filtrations
$(\mathcal{F}_t)_{t \ge 0}$ and $(\mathcal{G}_t)_{t \ge 0}$
on an underlying probability space $(\Omega, \mathcal{F}, \mathbb{P})$.
Set $\mathcal{F}_\infty = \bigvee_{t \ge 0} \mathcal{F}_t$
and $\mathcal{G}_\infty = \bigvee_{t \ge 0} \mathcal{G}_t$.
Assume that the sub-$\sigma$-fields $\mathcal{F}_\infty$
and $\mathcal{G}_\infty$ are independent.  Suppose that
\[
\zeta_t = \zeta_0 + \int_0^t H_s \, d\beta_s + \int_0^t K_s \, ds,
\]
where $\zeta_0$ is $\mathcal{F}_0$-measurable, the integrands
$(H_t)_{t \ge 0}$
and $(K_t)_{t \ge 0}$ are $(\mathcal{F}_t)_{t \ge 0}$-adapted,
and $(\beta_t)_{t \ge 0}$ is an
$(\mathcal{F}_t)_{t \ge 0}$-Brownian motion.  Suppose further that $\rho_t = \int_0^t J_s \, ds$,
where $(J_t)_{t \ge 0}$ is a nonnegative, $(\mathcal{G}_t)_{t \ge 0}$-adapted process such that $\rho_t$
is finite for all $t \ge 0$ almost surely.
For $t \ge 0$ put
\[
\mathcal{F}_{\rho_t}
=
\sigma\{L_{s \wedge \rho_t} : \text{$s \ge 0$ and $L$ is
$(\mathcal{F}_t)_{t \ge 0}$-optional}\}.
\]
Set $\mathcal{H}_t = \mathcal{F}_{\rho_t} \vee \mathcal{G}_t$, $t \ge 0$.
Then the following hold.
\begin{itemize}
\item[(i)]
The process $(\beta_{\rho_t})_{t \ge 0}$ is a continuous local martingale
for the filtration $(\mathcal{H}_t)_{t \ge 0}$ with quadratic variation
$[\beta_{\rho_\cdot}]_t = \rho_t$.
\item[(ii)]
The process $(\gamma_t)_{t \ge 0}$, where
\[
\gamma_t = \int_0^t \sqrt{\frac{1}{J_s}} \, d \beta_{\rho_s},
\]
is a Brownian motion for the filtration $(\mathcal{H}_t)_{t \ge 0}$.
\item[(iii)]
If $\xi_t = \zeta_{\rho_t}$, $t \ge 0$, then
\[
\xi_t = \xi_0 + \int_0^t H_{\rho_s} \sqrt{J_s}\, d \gamma_s + \int_0^t K_{\rho_s} J_s \, ds.
\]
\item[(iv)]
If $(\eta_t)_{t \ge 0}$ is a continuous local martingale
for the filtration $(\mathcal{G}_t)_{t \ge 0}$, then
$(\eta_t)_{t \ge 0}$ is also a continuous local martingale
for the filtration $(\mathcal{H}_t)_{t \ge 0}$
and $[\eta,\gamma] \equiv 0$.
\end{itemize}
\end{proposition}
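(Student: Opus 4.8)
The plan is to reduce the whole proposition to the standard theory of time-changes of continuous local martingales by enlarging the filtration $(\mathcal{F}_t)_{t\ge 0}$ by the independent $\sigma$-field $\mathcal{G}_\infty$. Since enlarging a filtration by a $\sigma$-field independent of $\mathcal{F}_\infty$ preserves the (local) martingale property, the process $(\beta_t)_{t\ge 0}$ remains a Brownian motion, and $(\zeta_t)_{t\ge 0}$ keeps its stochastic differential representation, relative to $(\mathcal{F}_t\vee\mathcal{G}_\infty)_{t\ge 0}$. Because $\rho_t$ is $\mathcal{G}_t$-measurable, and hence $\mathcal{G}_\infty$-measurable, each $\rho_t$ is an $(\mathcal{F}_t\vee\mathcal{G}_\infty)$-stopping time and $t\mapsto\rho_t$ is continuous and nondecreasing, so $(\rho_t)_{t\ge 0}$ is a time-change for that enlarged filtration; I would check that the corresponding stopped $\sigma$-field is exactly $\mathcal{F}_{\rho_t}\vee\mathcal{G}_\infty$ with $\mathcal{F}_{\rho_t}$ as defined in the statement.

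Granting this, part (i) follows from the time-change theorem for continuous local martingales (see, e.g., \cite{K01}): $(\beta_{\rho_t})_{t\ge 0}$ is a continuous local martingale for $(\mathcal{F}_{\rho_t}\vee\mathcal{G}_\infty)_{t\ge 0}$ with $[\beta_{\rho_\cdot}]_t=[\beta]_{\rho_t}=\rho_t$. To pass to the smaller filtration I would observe that $\beta_{\rho_t}$ is $\mathcal{F}_{\rho_t}$-measurable and $\rho_t$ is $\mathcal{G}_t$-measurable, so $(\beta_{\rho_t})_{t\ge 0}$ is $(\mathcal{H}_t)_{t\ge 0}$-adapted, and $\mathcal{H}_t=\mathcal{F}_{\rho_t}\vee\mathcal{G}_t\subseteq\mathcal{F}_{\rho_t}\vee\mathcal{G}_\infty$; a continuous process that is a local martingale for a filtration and is adapted to a sub-filtration is a local martingale for the sub-filtration (tower property, using the continuous localizing sequence $\inf\{t:|M_t|\ge n\}$), and the quadratic variation is unchanged. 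Part (ii) is then immediate: $\gamma$ is the stochastic integral of the $(\mathcal{H}_t)$-adapted integrand $J_s^{-1/2}$ against $\beta_{\rho_\cdot}$, so it is a continuous $(\mathcal{H}_t)$-local martingale with $[\gamma]_t=\int_0^t J_s^{-1}\,d\rho_s=\int_0^t J_s^{-1}J_s\,ds=t$ (here one uses $\rho_t=\int_0^t J_s\,ds$ and should note $J$ is taken strictly positive, as it is in the application), so Lévy's characterization identifies it as an $(\mathcal{H}_t)$-Brownian motion. For part (iii) I would apply the time-change rule for stochastic integrals, $\big(\int_0^\cdot H_s\,d\beta_s\big)_{\rho_t}=\int_0^t H_{\rho_s}\,d\beta_{\rho_s}$, substitute $d\beta_{\rho_s}=\sqrt{J_s}\,d\gamma_s$ from the definition of $\gamma$, and treat the bounded-variation term by the ordinary change of variables $\int_0^{\rho_t}K_s\,ds=\int_0^t K_{\rho_s}\,d\rho_s=\int_0^t K_{\rho_s}J_s\,ds$; the integrands $H_{\rho_s}$ and $K_{\rho_s}$ are $(\mathcal{H}_t)$-adapted because $H,K$ are $(\mathcal{F}_t)$-optional and $\rho_s$ is a $\mathcal{G}_s$-measurable stopping time.

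For part (iv), the first assertion is proved like part (i) with the roles of the two filtrations reversed: $(\eta_t)_{t\ge 0}$ is a continuous $(\mathcal{G}_t)$-local martingale, hence a continuous $(\mathcal{G}_t\vee\mathcal{F}_\infty)$-local martingale by independent enlargement, and since $\mathcal{H}_t\subseteq\mathcal{G}_t\vee\mathcal{F}_\infty$ and $(\eta_t)_{t\ge 0}$ is $(\mathcal{H}_t)$-adapted, it is a continuous $(\mathcal{H}_t)$-local martingale. To show $[\eta,\gamma]\equiv 0$ I would, after localizing so that $\eta$ and $\gamma$ are bounded martingales, verify that $(\eta_t\gamma_t)_{t\ge 0}$ is an $(\mathcal{H}_t)$-martingale; expanding $\eta_t\gamma_t-\eta_s\gamma_s$ and using the $(\mathcal{H}_t)$-martingale property of $\eta$ and $\gamma$, this reduces to $\mathbb{E}[(\eta_t-\eta_s)(\gamma_t-\gamma_s)\mid\mathcal{H}_s]=0$, which I would get by conditioning on the larger field $\mathcal{F}_{\rho_s}\vee\mathcal{G}_\infty$, pulling out the $\mathcal{G}_\infty$-measurable factor $\eta_t-\eta_s$, and using that $\gamma$ is a martingale for $(\mathcal{F}_{\rho_t}\vee\mathcal{G}_\infty)_{t\ge 0}$, so $\mathbb{E}[\gamma_t-\gamma_s\mid\mathcal{F}_{\rho_s}\vee\mathcal{G}_\infty]=0$. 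Since $\eta\gamma-[\eta,\gamma]$ is always a local martingale, the continuous finite-variation process $[\eta,\gamma]$ must then vanish.

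The probabilistic content here is light—independent enlargement, the Dambis--Dubins--Schwarz/time-change theorem, Lévy's characterization, and the tower property—and the step I expect to be the genuine obstacle is purely the filtration bookkeeping: confirming that the $\sigma$-field $\mathcal{F}_{\rho_t}$ of the statement really is the stopped $\sigma$-field of the $\mathcal{G}_\infty$-enlarged filtration at $\rho_t$ (so that the off-the-shelf theorems apply), checking at each step that the time-changed object is $(\mathcal{H}_t)$-adapted, and ensuring the localizing sequences are stopping times for all the filtrations involved.
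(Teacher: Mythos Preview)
The paper does not actually give a proof of this proposition: it is introduced with the sentence ``We required the following proposition that collects together some simple facts about time-changes'' and then simply stated without argument. So there is nothing to compare your proposal against on the paper's side.

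Your proposal is the correct and standard route. The key device---enlarging $(\mathcal{F}_t)_{t\ge 0}$ by the independent $\sigma$-field $\mathcal{G}_\infty$ so that each $\rho_t$ becomes a genuine stopping time, then invoking the time-change theorem for continuous local martingales and L\'evy's characterization---is exactly what one does, and the symmetric enlargement of $(\mathcal{G}_t)_{t\ge 0}$ by $\mathcal{F}_\infty$ to handle~(iv) is the natural counterpart. Your argument for $[\eta,\gamma]\equiv 0$ by conditioning on $\mathcal{F}_{\rho_s}\vee\mathcal{G}_\infty$ and pulling out the $\mathcal{G}_\infty$-measurable increment $\eta_t-\eta_s$ is clean and correct. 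You have also correctly identified the only place where real care is needed: the filtration bookkeeping showing that the stopped $\sigma$-field of $(\mathcal{F}_s\vee\mathcal{G}_\infty)_{s\ge 0}$ at $\rho_t$ coincides with $\mathcal{F}_{\rho_t}\vee\mathcal{G}_\infty$ (with $\mathcal{F}_{\rho_t}$ defined via optional processes as in the statement), and that the localizing sequences transfer between the filtrations. Once that is written out, everything else is off-the-shelf. One small caveat worth making explicit in your write-up is that part~(ii) tacitly assumes $J_s>0$ for Lebesgue-a.e.\ $s$ so that $J_s^{-1/2}$ is well defined; this holds in the application in the paper and you already flagged it.
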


\begin{remark}\label{r:counter}
In this example $K/M = SO(2)$ has dimension $1$ and there is a type of skew-product decomposition. The angular part can indeed be written as a time-change depending on the radial part of a Brownian motion on $SO(2)$.
However, we cannot take this Brownian motion to be independent of the radial part.
\end{remark}

\section{Open problem}

The apparent counterexamples to \cite[Theorem~4]{L09} arise in Sections \ref{s:rotated} and \ref{s_counter} because $K/M$ is one-dimensional and hence trivially irreducible. When $K/M$ has dimension greater than $1$, irreducibility implies the nonexistence of a nonzero $M$-invariant tangent vector and it is this latter property that is actually used in the proof of  \cite[Theorem~4]{L09}.  In the examples in Sections \ref{s:BM}, \ref{s:rotated} and \ref{s_counter} the group $M$ is the trivial group consisting of just the identity and there certainly are nonzero $M$-invariant tangent vector.

Therefore, in view of the three examples we presented and Remarks \ref{r:BM}, \ref{r:rotated}, \ref{r:counter} we propose the following open problem.

\begin{question}
Suppose that $(x_t)_{t\geq 0}$ is a continuous Markov process with state space a smooth manifold $X$ and distribution that is equivariant under the smooth action of a Lie group $K$ on $X$ so that we can decompose $(x_t)_{t\geq 0}$ into a radial part $(y_t)_{t\geq 0}$ that is a Markov process on the submanifold $Y$ which is transversal to the orbits of $K$ and an angular part $(z_t)_{t\geq 0}$ that is a process on the homogeneous space $K/M$. Suppose further that $\text{dim}(K/M)=1$.
\begin{enumerate}
\item When can we write $z_t=B_{a_t}$ where $(B_t)_{t\geq 0}$ is a Brownian motion on $K/M$ and $(a_t)_{t\geq 0}$ is a time-change that is adapted to the filtration generated by $(y_t)_{t\geq 0}$.
\item Under which conditions can we take the Brownian motion $(B_t)_{t\geq 0}$ to be independent of $(x_t)_{t\geq 0}$?
\end{enumerate}
\end{question}

\subsection*{Acknowledgment}
We thank Prof. M. Liao for kindly explaining to us the role played by the assumption of irreducibility in \cite[Theorem~4]{L09}. We thank an anonymous referee for comments that improved this manuscript and for the example described in Section \ref{s:rotated}.

\bibliographystyle{amsalpha}
\bibliography{LIAO}
\end{document}